\newtheorem{thm}{Theorem}[section]
\newtheorem{defn}[thm]{Definition}
\newtheorem{prop}[thm]{Proposition}
\newtheorem{cor}[thm]{Corollary}
\newtheorem{lem}[thm]{Lemma}
\newtheorem{remark}[thm]{Remark}
\newcommand{\ds}{\displaystyle}
\begin{document}

\title{Perturbations on $K$-Fusion Frames}

\author[A. Bhandari]{A. Bhandari}
\address{Department of Mathematics\\ National Institute of Technology\\
Meghalaya, India}
\email{animesh@nitm.ac.in}

\author[S. Mukherjee]{S. Mukherjee}
\address{Department of Mathematics\\ National Institute of Technology\\
Meghalaya, India}
\email{saikat.mukherjee@nitm.ac.in}
\thanks{Second author is partially supported by DST-SERB project MTR/2017/000797}

\subjclass[2010]{42C15}
\keywords{Frames; $K$-fusion frames; Perturbations; Erasures}

\begin{abstract}
 $K$-fusion frames are generalizations of fusion frames in frame theory. This article characterizes various kinds of property of $K$-fusion frames. Several perturbation results on $K$-fusion frames are formulated and analyzed.
\end{abstract}

\maketitle

\section{Introduction}

The concept of Hilbert space frames was first introduced by Duffin and Schaeffer \cite{DuSc52} in 1952. Later, in 1986, frame theory was reintroduced and popularized by Daubechies, Grossman and Meyer \cite{DaGrMa86}. Since then frame theory has been widely used by mathematicians and engineers in various fields of mathematics and engineering, namely, operator theory \cite{HaLa00}, harmonic analysis \cite{Gr01}, signal processing \cite{Fe99}, sensor network \cite{CaKuLiRo07}, data analysis \cite{CaKu12}, etc.

Frame theory literature became richer through several generalizations-fusion frame (frames of subspaces) \cite{CaKu04, CaKuLi08} , $G$-frame (generalized frames) \cite{Su06}, $K$-frame (atomic systems) \cite{Ga12}, $K$-fusion frame (atomic subspaces) \cite{Bh17}, etc. and these generalizations have been proved to be useful in various applications.

This article focuses on study, characterize and explore several properties of $K$-fusion frame. It is organized as follows. Section \ref{Sec-Preli} is devoted to the basic definitions and results related to frames and fusion frames. The characteristics of $K$-fusion frames are discussed in Section \ref{Sec-char}. Finally, results related to perturbation and erasure properties are established  in Section \ref{Sec-per}.

Throughout the paper, $\mathcal{H}$ is a separable Hilbert space. We denote by $\mathcal{L}(\mathcal{H}_1, \mathcal{H}_2)$ the space of all bounded linear operators from  $\mathcal{H}_1$ into $\mathcal{H}_2$, and $\mathcal L(\mathcal H)$ for $\mathcal L(\mathcal H, \mathcal H)$. For $T \in \mathcal{L}\mathcal{(H)}$, we denote $D(T), N(T)$ and $R(T)$ for domain, null space and range of $T$, respectively. For a collection of closed subspaces $\mathcal W_i$ of $\mathcal H$ and scalars $v_i$, $i\in I$, the weighted collection of closed subspaces $\lbrace (\mathcal W_i , v_i) \rbrace_{i \in I}$ is denoted by $\mathcal W_v$. We consider the index set $I$ to be finite or countable.

\section{Preliminaries}\label{Sec-Preli}

In this section we recall basic definitions and results needed in this paper. We refer the books of Ole Christensen \cite{Ch08} and Casazza et.al. \cite{CaKu12} for an introduction to frame theory.

\subsection{Frame} A collection  $\{ f_i \}_{i\in I}$ in $\mathcal{H}$ is called a \emph{frame} if there exist constants $A,B >0$ such that \begin{equation}\label{Eq:Frame} A\|f\|^2~ \leq ~\sum_{i\in I} |\langle f,f_i\rangle|^2 ~\leq ~B\|f\|^2,\end{equation}for all $f \in \mathcal{H}$. The numbers $A, B$ are called \emph{frame bounds}. The supremum over all $A$'s and infimum over all $B$'s satisfying above inequality are called the \emph{optimal frame bounds}.
If a collection satisfies only the right inequality in (\ref{Eq:Frame}), it is called a {\it Bessel sequence}.

Given a frame $\{f_i\}_{i\in I}$ of $\mathcal{H}$. The \emph{pre-frame operator} or \emph{synthesis operator} is a bounded linear operator $T: l^2(I)\rightarrow \mathcal{H}$ and is defined by $T\{c_i\} = \ds \sum_{i\in I} c_i f_i$. The adjoint of $T$, $T^*: \mathcal{H} \rightarrow l^2(I)$, given by $T^*f = \{\langle f, f_i\rangle\}$, is called the \emph{analysis operator}. The \emph{frame operator}, $S=TT^*: \mathcal{H}\rightarrow \mathcal{H}$, is defined by $$Sf=TT^*f = \sum_{i\in I} \langle f, f_i\rangle f_i.$$ It is well-known that the frame operator is bounded, positive, self adjoint and invertible.



\subsection{Fusion Frame} Consider a weighted collection of closed subspaces, $\mathcal W_v$, of $\mathcal H$. Then $\mathcal W_v$  is said to be a fusion frame for $\mathcal H$, if there exist constants $0<A \leq B <\infty$ satisfying
\begin{equation}\label{Eq:Fus-frame}
A\|f\|^2 \leq \sum_{i \in I} v_i ^2 \|P_{\mathcal W_i}f\|^2 \leq B\|f\|^2, \end{equation}
where $P_{\mathcal W_i}$ is the orthogonal projection from $\mathcal H$ onto $\mathcal W_i$.
The constants $A$ and $B$ are called {\it fusion frame bounds}.
A collection of closed subspaces, satisfying only the right inequality in (\ref{Eq:Fus-frame}), is called a {\it fusion Bessel sequence}.

Additionally, it is to be noted that for every fusion frame $\mathcal W_v$, there are frames $\lbrace f_{ij} \rbrace_{j \in J_i}$ for each $\mathcal W_i$ and these are called local frames for $\mathcal W_v$. It is a well-known fact that $\mathcal W_v$ is a fusion frame if and only if $\lbrace w_i f_{ij} \rbrace_{j \in J_i, i \in I}$ is a frame for $\mathcal H$, for details readers are referred to Theorem 3.2 in \cite{CaKu04}.

For a family of closed subspaces, $\lbrace \mathcal W_i \rbrace_{i \in I}$, of $\mathcal H$, the associated $l^2$ space is defined by $\left(\sum\limits_{i \in I} \bigoplus \mathcal W_i\right)_{l^2} = \lbrace \lbrace f_i \rbrace_{i \in I} : f_i \in \mathcal W_i, \sum\limits_{i \in I} \|f_i\|^2 < \infty \rbrace$ with the inner product $\langle \lbrace f_i \rbrace, \lbrace g_i \rbrace \rangle = \sum\limits_{i \in I} \langle f_i , g_i \rangle_{\mathcal{H}} $.

Let $\mathcal W_v$ be a fusion frame. Then the associated synthesis operator $T_{\mathcal W}: (\sum \limits_{i \in I} \bigoplus \mathcal W_i)_{l^2} \rightarrow \mathcal H$ is defined as $T_{\mathcal W} (f)=\sum \limits_{i \in I} v_i f_i$ for all $\lbrace f_i \rbrace_{i \in I} \in (\sum \limits_{i \in I} \oplus \mathcal W_i)_{l^2}$ and the analysis operator $T^*_{\mathcal W}: \mathcal H \rightarrow  (\sum \limits_{i \in I} \oplus \mathcal W_i)_{l^2} $ is defined as  $T^*_{\mathcal W} (f)=\lbrace v_i P_{\mathcal W_i} (f) \rbrace_{i \in I} $. It is well-known that (see \cite{CaKu04}) the synthesis operator $ T_{\mathcal W}$ of a fusion frame is bounded, linear and onto, whereas the corresponding analysis operator $ T_{\mathcal W} ^*$  is (possibly into) an isomorphism. Corresponding fusion frame operator is defined as $S_\mathcal W (f) = T_{\mathcal W}T_{\mathcal W}^*(f)=\sum \limits_{i \in I} v_i ^2 P_{\mathcal W_i}(f)$.  $S_\mathcal{W}$ is bounded, positive, self adjoint and invertible. Any signal $f \in \mathcal H$ can be expressed by its fusion frame measurements $\lbrace v_i P_{\mathcal W_i} f \rbrace_{i \in I}$ as
\begin{equation}\label{Eq:Fusion-frame-recons} f=\sum_{i \in I} v_i S_\mathcal W ^{-1} (v_i P_{\mathcal W_i} f) .\end{equation}

\subsection{$K$-fusion frame}
In \cite{Bh17}, authors, introduced a generalization of fusion frame, $K$-fusion frame, and scrutinized the equivalence between atomic subspaces and $K$-fusion frames. $K$-fusion frame is used to reconstruct signals from range of a bounded linear operator $K$.

\begin{defn}($K$-fusion frame)
Let $K\in \mathcal L(\mathcal H)$,  $\mathcal W_v=\lbrace \mathcal (W_i,v_i) \rbrace_{i \in I}$ be a weighted collection of closed subspaces of $\mathcal H$. Then   $\mathcal W_v$ is said to be a $K$-fusion frame for $\mathcal H$  if there exist positive constants $A, B$ such that for all $f \in \mathcal H$ we have
\begin{equation}\label{Eq:K-Fus-frame}
A\|K^*f\|^2 \leq \sum_{i \in I} v_i ^2 \|P_{\mathcal W_i}f\|^2 \leq B\|f\|^2. \end{equation}
\end{defn}

In the rest of this Section, we recall some fundamental results in Hilbert space that are necessary to present some outcomes of this article.

\begin{thm}(Douglas' factorization theorem \cite{Do66})\label{Thm-Douglas} Let $\mathcal{H}_1, \mathcal{H}_2,$ and $\mathcal H$ be Hilbert spaces and $S\in \mathcal L(\mathcal H_1, \mathcal H)$, $T\in \mathcal L(\mathcal H_2, \mathcal H)$. Then the following are equivalent:
\begin{enumerate}
\item $R(S) \subseteq R(T)$.
\item $SS^* \leq \alpha TT^*$ for some $\alpha >0$.
\item $S = TL$ for some $L \in \mathcal L(\mathcal H_1, \mathcal H_2)$.
\end{enumerate}
\end{thm}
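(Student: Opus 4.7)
The plan is to prove the implications $(3) \Rightarrow (1)$, $(3) \Rightarrow (2)$, $(1) \Rightarrow (3)$, and $(2) \Rightarrow (3)$; together these establish the full equivalence. The first two are short. $(3) \Rightarrow (1)$ is immediate since $S = TL$ forces $R(S) = T(L(\mathcal H_1)) \subseteq R(T)$. For $(3) \Rightarrow (2)$, taking adjoints of $S = TL$ yields $SS^* = TLL^*T^*$, and since $LL^* \leq \|L\|^2 I_{\mathcal H_2}$ as self-adjoint operators, we conclude $SS^* \leq \|L\|^2 TT^*$, giving (2) with $\alpha = \|L\|^2$.

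For $(1) \Rightarrow (3)$ I would use the closed graph theorem. Let $T_0$ be the restriction of $T$ to $N(T)^\perp$, so that $T_0: N(T)^\perp \to R(T)$ is a linear bijection. Since $R(S) \subseteq R(T)$, for every $x \in \mathcal H_1$ there is a unique element $Lx \in N(T)^\perp$ with $T(Lx) = Sx$; this defines a linear map $L: \mathcal H_1 \to \mathcal H_2$ satisfying $TL = S$. To show $L$ is bounded, suppose $x_n \to x$ in $\mathcal H_1$ and $Lx_n \to y$ in $\mathcal H_2$. Closedness of $N(T)^\perp$ places $y$ in $N(T)^\perp$, and continuity of $T$ and $S$ gives $Ty = \lim T(Lx_n) = \lim Sx_n = Sx$. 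The uniqueness in the definition of $Lx$ forces $y = Lx$, so the graph of $L$ is closed and $L \in \mathcal L(\mathcal H_1, \mathcal H_2)$.

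The substantive step is $(2) \Rightarrow (3)$, which I would attack by constructing $L^*$ on $R(T^*)$ and extending. Define $L^*: R(T^*) \to \mathcal H_1$ by the rule $L^*(T^*y) := S^*y$ for $y \in \mathcal H$. Well-definedness and boundedness both rest on the inequality (2): for any $y \in \mathcal H$,
\[
\|S^*y\|^2 = \langle SS^*y, y\rangle \leq \alpha\,\langle TT^*y, y\rangle = \alpha\,\|T^*y\|^2,
\]
so $T^*y_1 = T^*y_2$ forces $S^*y_1 = S^*y_2$, and moreover $L^*$ is $\sqrt{\alpha}$-Lipschitz on $R(T^*)$. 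I would then extend $L^*$ by continuity to $\overline{R(T^*)}$ and by zero on the orthogonal complement $\overline{R(T^*)}^\perp = N(T)$, obtaining a bounded operator in $\mathcal L(\mathcal H_2, \mathcal H_1)$ satisfying $L^*T^* = S^*$ on all of $\mathcal H$, equivalently $S = TL$. The main obstacle lies here: since $R(T^*)$ need not be closed, the argument genuinely requires this two-step extension, and its correctness leans entirely on the operator inequality in (2) together with the orthogonal decomposition $\mathcal H_2 = \overline{R(T^*)} \oplus N(T)$.
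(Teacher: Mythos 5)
The paper only quotes this as a classical result from Douglas's 1966 paper and gives no proof of its own, so there is nothing internal to compare against. Your argument is correct and is essentially the standard proof: the implications $(3)\Rightarrow(1)$ and $(3)\Rightarrow(2)$ are handled as usual, the closed-graph construction of $L$ with values in $N(T)^\perp$ correctly gives $(1)\Rightarrow(3)$, and the construction of the adjoint on $R(T^*)$ via the inequality $\|S^*y\|^2\leq\alpha\|T^*y\|^2$, extended by continuity to $\overline{R(T^*)}$ and by zero on $N(T)$, correctly gives $(2)\Rightarrow(3)$.
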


\begin{lem}(Moore-Penrose pseudo-inverse \cite{Ka80, Ru91, Ch08, As09}) Let $\mathcal H$ and $\mathcal K$ be two Hilbert spaces and $T \in \mathcal L(\mathcal H,\mathcal K)$ be a closed range operator, then the followings hold:
\begin{enumerate}
\item $TT^{\dag}=P_{T(\mathcal H)}$, $T^{\dag}T=P_{T^*(\mathcal K)}$
\item $\frac {\|f\|} {\|T^{\dag}\|} \leq \|T^*f\|$ for all $f \in T(\mathcal H)$.
\item $TT^{\dag}T=T$, $T^{\dag}TT^{\dag}=T^{\dag}$, $(TT^{\dag})^*=TT^{\dag}$, $(T^{\dag}T)^*=T^{\dag}T$.
\end{enumerate}
\end{lem}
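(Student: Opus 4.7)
The plan is to work directly from the construction of the Moore--Penrose pseudo-inverse: since $T$ has closed range, the orthogonal decompositions $\mathcal H = N(T) \oplus R(T^*)$ and $\mathcal K = R(T) \oplus N(T^*)$ are available, and the restriction $T_0 := T|_{N(T)^\perp} : N(T)^\perp \to R(T)$ is a bounded bijection (open mapping theorem). One then defines $T^\dag$ by $T^\dag|_{R(T)} = T_0^{-1}$ and $T^\dag|_{R(T)^\perp} = 0$, extending linearly. Everything to be proved is an immediate consequence of this construction, so the proof is mostly bookkeeping.

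For part (1), I would verify directly that $TT^\dag$ acts as the identity on $R(T) = T(\mathcal H)$ and is zero on $R(T)^\perp$, which identifies it as the orthogonal projection $P_{T(\mathcal H)}$. Similarly, $T^\dag T$ acts as the identity on $N(T)^\perp$ and as zero on $N(T)$; since $T$ has closed range so does $T^*$, hence $N(T)^\perp = \overline{R(T^*)} = R(T^*) = T^*(\mathcal K)$, giving $T^\dag T = P_{T^*(\mathcal K)}$.

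For part (3), the four identities follow cheaply from part (1). Applying $T$ on the right to $TT^\dag = P_{R(T)}$ gives $TT^\dag T = P_{R(T)} T = T$, because $Tx \in R(T)$ for every $x$; applying $T^\dag$ on the right to $T^\dag T = P_{R(T^*)}$ and using $R(T^\dag) \subseteq N(T)^\perp = R(T^*)$ gives $T^\dag T T^\dag = T^\dag$. The self-adjointness statements $(TT^\dag)^* = TT^\dag$ and $(T^\dag T)^* = T^\dag T$ are automatic because part (1) has already identified these operators as orthogonal projections.

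Part (2) is the only step that requires a small computation. Given $f \in T(\mathcal H)$, part (1) gives $f = TT^\dag f$, so
\[
\|f\|^2 \;=\; \langle TT^\dag f, f\rangle \;=\; \langle T^\dag f,\, T^* f\rangle \;\leq\; \|T^\dag f\|\,\|T^* f\| \;\leq\; \|T^\dag\|\,\|f\|\,\|T^* f\|,
\]
and dividing by $\|f\|$ (the case $f=0$ being trivial) yields $\|f\|/\|T^\dag\| \leq \|T^* f\|$. There is no real obstacle in any of this; the only \emph{idea} needed, as opposed to unpacking of definitions, is the Cauchy--Schwarz trick in (2), where one uses part (1) to rewrite $f$ as $TT^\dag f$ so that the adjoint relation can produce the factor $T^* f$ on the right.
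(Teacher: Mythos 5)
The paper does not prove this lemma; it is quoted as a known preliminary result with citations to standard references, so there is no in-paper argument to compare against. Your proof is correct and is the standard one: constructing $T^\dag$ from the bounded bijection $T|_{N(T)^\perp}\colon N(T)^\perp\to R(T)$, reading off (1) and (3) as bookkeeping, and obtaining (2) via $f=TT^\dag f$ together with Cauchy--Schwarz, exactly as in the cited sources.
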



\begin{lem}(\cite{Ga07, LiYaZh15}) \label{lem-projection} Suppose  $\mathcal H$ and $\mathcal K$ be two Hilbert spaces and $T \in \mathcal L(\mathcal H,\mathcal K)$. consider $\mathcal W$ be a closed subspace of $\mathcal H$ and $\mathcal V$ be a closed subspace of $\mathcal K$. Then we have the followings:
\begin{enumerate}
\item $P_{\mathcal W}T^*P_{\overline {T \mathcal W}}=P_{\mathcal W}T^*$.
\item $P_{\mathcal W}T^*P_{\mathcal V}=P_{\mathcal W}T^*$. if and only if $T \mathcal W \subset \mathcal V$.
\end{enumerate}
\end{lem}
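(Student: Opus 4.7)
Both parts reduce to one orthogonality principle: any vector lying in the orthogonal complement of a closed subspace that contains $T\mathcal W$ is annihilated by $P_{\mathcal W}T^{*}$. The plan is to establish this by an inner-product computation and then apply it to the two subspaces $\overline{T\mathcal W}$ and $\mathcal V$.

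For part (1), I would fix $f \in \mathcal K$ and decompose $f = P_{\overline{T\mathcal W}}f + g$ with $g \in (\overline{T\mathcal W})^{\perp}$. For any $w \in \mathcal W$, the identity $\langle P_{\mathcal W}T^{*}g, w\rangle = \langle g, Tw\rangle$ forces the left side to vanish, since $Tw \in T\mathcal W \subset \overline{T\mathcal W}$. Hence $P_{\mathcal W}T^{*}g = 0$, and applying $P_{\mathcal W}T^{*}$ to the decomposition of $f$ yields $P_{\mathcal W}T^{*}f = P_{\mathcal W}T^{*}P_{\overline{T\mathcal W}}f$.

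For the ``if'' direction of part (2), assume $T\mathcal W \subset \mathcal V$ and repeat the same decomposition with respect to the closed subspace $\mathcal V$: writing $f = P_{\mathcal V}f + g'$ with $g' \in \mathcal V^{\perp}$, the containment $Tw \in \mathcal V$ gives $\langle P_{\mathcal W}T^{*}g', w\rangle = \langle g', Tw\rangle = 0$ for every $w \in \mathcal W$, so $P_{\mathcal W}T^{*}g' = 0$ and $P_{\mathcal W}T^{*}P_{\mathcal V} = P_{\mathcal W}T^{*}$. For the ``only if'' direction, I would fix an arbitrary $v \in \mathcal V^{\perp}$, feed it into the hypothesis, and use $P_{\mathcal V}v = 0$ to conclude $P_{\mathcal W}T^{*}v = 0$. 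Then, for every $w \in \mathcal W$, the chain $\langle Tw, v\rangle = \langle w, T^{*}v\rangle = \langle P_{\mathcal W}w, T^{*}v\rangle = \langle w, P_{\mathcal W}T^{*}v\rangle = 0$ shows $Tw \perp v$, and since $v$ was arbitrary and $\mathcal V$ is closed we obtain $Tw \in (\mathcal V^{\perp})^{\perp} = \mathcal V$.

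No step here presents a real obstacle; the argument is elementary Hilbert-space manipulation. The only points worth stating carefully are that $(\overline{T\mathcal W})^{\perp}=(T\mathcal W)^{\perp}$, that projections are self-adjoint, and that the closedness of $\mathcal V$ is what allows the final step $(\mathcal V^{\perp})^{\perp}=\mathcal V$ in the converse of part (2).
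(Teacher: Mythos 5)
Your argument is correct and complete: the orthogonality computation $\langle P_{\mathcal W}T^{*}g, w\rangle = \langle g, Tw\rangle$ for $w\in\mathcal W$ is exactly the right reduction, and the converse in part (2) via $(\mathcal V^{\perp})^{\perp}=\mathcal V$ is handled properly. Note that the paper itself gives no proof of this lemma (it is quoted from the cited references), but your proof is the standard one and there is nothing to add or correct.
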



\begin{defn}(Drazin inverse \cite{Ad03, HuZhGeYu12}) \label{def-Drazin} Let $S, T \in \mathcal L(\mathcal H)$, $S$ is said to be the Drazin inverse of $T$ if we have the following:
\begin{enumerate}
\item $STS=S$.
\item $ST=TS$.
\item $TST^k=T^k$, for some positive integer $k$.
\end{enumerate}
\end{defn}

It is to be noted that $T \in \mathcal L(\mathcal H)$ has the Drazin inverse in $ \mathcal L(\mathcal H)$ if and only if $\lambda =0$ is a pole of the resolvent operator $(\lambda I - T)^{-1}$. Moreover, the order of the pole is equal to the index of $T$. In particular $0$ is not an accumulation point in the spectrum $\sigma(T)$.

\section{Characterization of $K$-fusion frames}\label{Sec-char}
In this section, we characterize various properties of $K$-fusion frame. The following theorem provides a sufficient condition on a bounded, linear operator $K$ under which the image of $K$-fusion frame remains a $K$-fusion frame.

\begin{thm} Let $ K \in \mathcal L(\mathcal H)$ be an idempotent, closed range operator and  $\mathcal W_v$  be a $K$-fusion frame for $\mathcal H$ with $K^{\dag} \overline {K(\mathcal W_i)} \subset \mathcal W_i$. Then $\lbrace (K \mathcal W_i, v_i)\rbrace_{i \in I}$ constitutes a $K$-fusion frame for $\mathcal H$.
\end{thm}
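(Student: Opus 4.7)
The plan is to bootstrap both frame bounds of $\{(K\mathcal{W}_i, v_i)\}_{i\in I}$ from those of $\mathcal{W}_v$ by means of $K$ and its Moore--Penrose pseudoinverse $K^{\dag}$. Since the definition of a $K$-fusion frame involves orthogonal projections onto \emph{closed} subspaces, I would first verify that $K\mathcal{W}_i$ is itself closed, so that $P_{\overline{K\mathcal{W}_i}} = P_{K\mathcal{W}_i}$. This follows quickly from idempotency: any $y \in \overline{K\mathcal{W}_i}$ lies in the (closed) range $R(K)$, so $K^{\dag}y \in \mathcal{W}_i$ by hypothesis, and $KK^{\dag}y = P_{R(K)}y = y$ exhibits $y$ as an element of $K\mathcal{W}_i$.

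For the Bessel (upper) bound, the key pointwise estimate I would establish is
\[
\|P_{K\mathcal{W}_i}f\| \;\leq\; \|K^{\dag}\|\, \|P_{\mathcal{W}_i} K^*f\|.
\]
I would prove this by setting $g := P_{K\mathcal{W}_i}f$ and writing $g = KK^{\dag}g$, so that $\|g\|^2 = \langle f, g\rangle = \langle K^*f, K^{\dag}g\rangle$; the hypothesis $K^{\dag}g \in \mathcal{W}_i$ then allows me to insert $P_{\mathcal{W}_i}$ and apply Cauchy--Schwarz. Squaring, summing against $v_i^2$, and applying the existing Bessel bound of $\mathcal{W}_v$ to $K^*f$ then yields an upper $K$-fusion frame bound of the form $B\|K^{\dag}\|^2\|K\|^2$.

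For the lower bound I would exploit the complementary comparison $\|P_{\mathcal{W}_i} K^*f\| \leq \|K\|\, \|P_{K\mathcal{W}_i}f\|$, which comes from a Cauchy--Schwarz argument on unit vectors $w \in \mathcal{W}_i$ using $Kw \in K\mathcal{W}_i$. The second ingredient is the algebraic identity $K^*K^* = K^*$, which is immediate from $K^2 = K$. Substituting $K^*f$ for $f$ in the lower bound of (\ref{Eq:K-Fus-frame}) then produces
\[
A\|K^*f\|^2 \;=\; A\|K^*(K^*f)\|^2 \;\leq\; \sum_{i \in I} v_i^2 \|P_{\mathcal{W}_i}K^*f\|^2 \;\leq\; \|K\|^2 \sum_{i \in I} v_i^2 \|P_{K\mathcal{W}_i}f\|^2,
\]
so $A/\|K\|^2$ serves as a lower $K$-fusion frame bound for $\{(K\mathcal{W}_i,v_i)\}_{i\in I}$.

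The main obstacle is the Bessel direction, where the invariance hypothesis $K^{\dag}\overline{K(\mathcal{W}_i)} \subset \mathcal{W}_i$ is used essentially: it is precisely what allows an element of $K\mathcal{W}_i$ to be pulled back into $\mathcal{W}_i$ via $K^{\dag}$ so that the original Bessel bound can be invoked. Once this invariance is in hand, the remainder is a tight interplay between idempotency, which supplies both $K^*K^* = K^*$ and $KK^{\dag}|_{R(K)} = \mathrm{Id}$, and the standard Moore--Penrose identities recorded earlier in the excerpt.
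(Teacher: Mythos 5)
Your proposal is correct and follows essentially the same route as the paper: the same closedness argument via $KK^{\dag}=P_{R(K)}$, the same two pointwise estimates $\|P_{K\mathcal W_i}f\|\leq\|K^{\dag}\|\,\|P_{\mathcal W_i}K^*f\|$ and $\|P_{\mathcal W_i}K^*f\|\leq\|K\|\,\|P_{K\mathcal W_i}f\|$, and the same use of $K^*K^*=K^*$ to feed $K^*f$ into the lower frame inequality, yielding the identical bounds $A/\|K\|^2$ and $B\|K^{\dag}\|^2\|K\|^2$. The only cosmetic difference is that you derive the two projection inequalities directly by Cauchy--Schwarz, whereas the paper cites Lemma \ref{lem-projection}.
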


\begin{proof} First we prove for all $i \in I$, $K(\mathcal W_i)$ is a closed subspace in  $\mathcal H$. Since $K^{\dag} \overline {K(\mathcal W_i)} \subset \mathcal W_i$, $K K^{\dag} \overline {K(\mathcal W_i)} \subset K(\mathcal W_i)$. Therefore using the Lemma 2.5.2 in \cite{Ch08} we have $KK^* (KK^*)^{-1}\overline {K(\mathcal W_i)} \subset K(\mathcal W_i)$ and hence $K(\mathcal W_i)$ is a closed subspace in  $\mathcal H$ for all $i \in I$. Since $\lbrace (\mathcal W_i, v_i)\rbrace_{i \in I}$ is a $K$-fusion frame in $\mathcal H$, there exist $A, B>0$ such that for all $f \in \mathcal H$ we have, $$A \|K^*f\|^2 \leq \sum_{i \in I} v_i ^2 \|P_{\mathcal W_i}f\|^2 \leq B \|f\|^2.$$ Again as $K$ is idempotent, using the Lemma \ref{lem-projection} we obtain, $$A \|K^*f\|^2 \leq \sum_{i \in I} v_i ^2 \|P_{\mathcal W_i} K^*f\|^2 \leq \|K\|^2 \sum_{i \in I} v_i ^2 \|P_{K \mathcal W_i} f\|^2 .$$ Therefore $\frac {A} {\|K\|^2} \|K^*f\|^2 \leq \sum \limits_{i \in I} v_i ^2 \|P_{K \mathcal W_i} f\|^2$.

Again from the Lemma \ref{lem-projection} we have $P_{K \mathcal W_i} = P_{K \mathcal W_i} K^{\dag *} P_{\mathcal W_i} K^*$.  Therefore for all $f \in \mathcal H$ we obtain, $$\sum_{i \in I} v_i ^2 \|P_{K \mathcal W_i} f\|^2 \leq \|K^{\dag *}\|^2 \sum_{i \in I} v_i ^2 \|P_{\mathcal W_i} K^*f\|^2 \leq  B \|K^{\dag *}\|^2 \|K\|^2 \|f\|^2.$$ Hence our assertion is tenable.

\end{proof}

In the next result, we further characterize $K$-fusion frame by means of Drazin inverse.

\begin{lem} Let $ K\in \mathcal L(\mathcal H)$  has non-zero Drazin inverse, $S$. Also suppose that  $\mathcal W_v$ is a $K$-fusion frame for $\mathcal H$. Then the following hold:

\begin{enumerate}
\item  $\mathcal W_v$ is a $SKS$-fusion frame for $\mathcal H$.
\item $\mathcal W_v$ is a $SK$-fusion frame in $\mathcal H$.
\end{enumerate}
\end{lem}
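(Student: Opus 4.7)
The upper bound in the definition of a $K$-fusion frame involves only $\|f\|^2$ and is independent of the operator used on the left, so in both parts the existing inequality $\sum_{i\in I}v_i^2\|P_{\mathcal W_i}f\|^2 \le B\|f\|^2$ will serve directly. Hence the entire plan reduces to producing a lower bound of the form $A'\|(SKS)^*f\|^2$ in part (1) and $A''\|(SK)^*f\|^2$ in part (2).

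The central observation is that both $SKS$ and $SK$ have range contained in $R(K)$, which will allow me to invoke Douglas' factorization theorem (Theorem \ref{Thm-Douglas}) to compare norms of the adjoints. For part (1), the first Drazin axiom $STS = S$ gives $SKS = S$, so $(SKS)^* = S^*$. Combining $SKS = S$ with the commutation relation $SK = KS$, I can rewrite $S = SKS = (KS)S = K S^{2}$, which shows $R(S)\subseteq R(K)$. By Theorem \ref{Thm-Douglas} there exists $\alpha>0$ with $SS^*\le \alpha KK^*$, i.e.\ $\|S^*f\|^2 \le \alpha\|K^*f\|^2$ for all $f\in\mathcal H$. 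Using the $K$-fusion lower bound $A\|K^*f\|^2\le \sum_{i\in I}v_i^2\|P_{\mathcal W_i}f\|^2$ I get
\[
\frac{A}{\alpha}\,\|(SKS)^*f\|^2 = \frac{A}{\alpha}\,\|S^*f\|^2 \;\le\; A\|K^*f\|^2 \;\le\; \sum_{i\in I}v_i^2\|P_{\mathcal W_i}f\|^2,
\]
and combined with the unchanged upper bound this finishes (1).

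For part (2), the commutation $SK = KS$ makes the inclusion $R(SK)=R(KS)\subseteq R(K)$ immediate (any element $KSf$ is in $R(K)$). Again Theorem \ref{Thm-Douglas} supplies $\alpha'>0$ such that $(SK)(SK)^* \le \alpha' KK^*$, equivalently $\|(SK)^*f\|^2 \le \alpha'\|K^*f\|^2$ for every $f$. Dividing the $K$-fusion lower bound by $\alpha'$ yields $(A/\alpha')\|(SK)^*f\|^2 \le \sum_{i\in I}v_i^2\|P_{\mathcal W_i}f\|^2$, and together with the Bessel bound this establishes the $SK$-fusion frame inequality.

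The only genuinely non-routine step is the algebraic manipulation in part (1) that turns the Drazin axioms into the range inclusion $R(S)\subseteq R(K)$; the third Drazin axiom $KSK^k=K^k$ plays no role here, and the boundedness of $S$ (coming from $S\in\mathcal L(\mathcal H)$) is what lets Douglas' theorem apply cleanly. Once the two range inclusions are in hand, both claims are immediate consequences of Douglas and the $K$-fusion frame inequality for $\mathcal W_v$.
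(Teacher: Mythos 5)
Your proof is correct, but it reaches the lower bounds by a different mechanism than the paper. The paper's proof uses the commutation axiom $SK=KS$ to rewrite $SKS=KS^{2}$ and then bounds the adjoints directly by operator-norm submultiplicativity: $\|(SKS)^{*}f\|=\|(S^{*})^{2}K^{*}f\|\le\|S\|^{2}\|K^{*}f\|$ and $\|(SK)^{*}f\|=\|S^{*}K^{*}f\|\le\|S\|\,\|K^{*}f\|$, which yields the explicit constants $A/\|S\|^{4}$ and $A/\|S\|^{2}$. You instead extract the range inclusions $R(S)\subseteq R(K)$ (from $S=SKS=KS^{2}$) and $R(SK)=R(KS)\subseteq R(K)$, and invoke Douglas' factorization theorem to get $\|S^{*}f\|^{2}\le\alpha\|K^{*}f\|^{2}$ and $\|(SK)^{*}f\|^{2}\le\alpha'\|K^{*}f\|^{2}$. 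This is valid, but note that Douglas is a detour here: your own factorizations $S=KS^{2}$ and $SK=KS$ already give the adjoint-norm estimates with the explicit constants $\|S\|^{4}$ and $\|S\|^{2}$, whereas Douglas only guarantees the existence of some $\alpha$. What your write-up adds that the paper omits is the observation that $SKS=S$ by the first Drazin axiom, so part (1) is really the statement that $\mathcal W_{v}$ is an $S$-fusion frame; that is a genuine clarification. You are also right that the third Drazin axiom is never needed and that the upper bound carries over unchanged in both parts.
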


\begin{proof} Since $\mathcal W_v$ is a $K$-fusion frame for $\mathcal H$, there exist $A, B>0$ such that for all $f \in \mathcal H$ we have, $A \|K^*f\|^2 \leq \sum \limits_{i \in I} v_i ^2 \|P_{\mathcal W_i}f\|^2 \leq B \|f\|^2$. Again as $S$ is the non-zero Drazin inverse of $K$, for all $f \in \mathcal H$ we have

$$\frac {A} {\|S\|^ {4}} \|(SKS)^* f\|^2 \leq A \|K^*f\|^2 \leq \sum_{i \in I} v_i ^2 \|P_{\mathcal W_i}f\|^2 \leq B \|f\|^2.$$ and also $$\frac {A} {\|S\|^2} \|(SK)^* f\|^2 \leq A \|K^*f\|^2 \leq \sum_{i \in I} v_i ^2 \|P_{\mathcal W_i}f\|^2 \leq B \|f\|^2.$$
Hence the conclusions follow.
\end{proof}

\begin{remark} It is to be noted that $\mathcal W_v$ is also a $KS$-fusion frame for $\mathcal H$ but this result is obvious as for any $(0\neq) T \in \mathcal L(\mathcal H)$,  $$\frac {A} {\|T\|^2} \|(KT)^*f\|^2 \leq A \|K^*f\|^2 \leq \sum_{i \in I} v_i ^2 \|P_{\mathcal W_i}f\|^2 \leq B \|f\|^2,$$ for all $f \in \mathcal H$.

\end{remark}

In the following theorem we scrutinize the robustness of $K$-fusion frames under erasure property.

\begin{thm}
 Let $ K \in \mathcal L(\mathcal H)$ be a closed range operator and  $\mathcal W_v$ be a $K$-fusion frame for $\mathcal H$ with bounds $A$ and $B$. Suppose $J\subseteq I$ such that $\sum\limits_{i \in J} v_i ^2 = C<\infty$ with $(A- C \|K^{\dag}\|^2) >0$. Then $\lbrace (\mathcal W_i, v_i)\rbrace_{i \in {I \setminus J}}$ forms a $K$-fusion frame for $R(K)$ with bounds $(A- C \|K^{\dag}\|^2)$ and $B$.
\end{thm}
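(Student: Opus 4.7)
The plan is to split the sum $\sum_{i\in I} v_i^2\|P_{\mathcal W_i}f\|^2$ over the disjoint index sets $J$ and $I\setminus J$, treat the $J$-part as an error to be absorbed, and show that on $R(K)$ this error is dominated by $\|K^*f\|^2$ via the Moore--Penrose estimate quoted in the preliminaries.

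First I would dispose of the upper bound: for every $f\in\mathcal H$, dropping the non-negative $J$-terms gives
\[
\sum_{i\in I\setminus J} v_i^2 \|P_{\mathcal W_i}f\|^2 \;\le\; \sum_{i\in I} v_i^2 \|P_{\mathcal W_i}f\|^2 \;\le\; B\|f\|^2,
\]
so $B$ works as an upper bound with no restriction on $f$.

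For the lower bound, I would restrict to $f\in R(K)$. Rearranging the $K$-fusion frame inequality for $\mathcal W_v$,
\[
\sum_{i\in I\setminus J} v_i^2 \|P_{\mathcal W_i}f\|^2 \;\ge\; A\|K^*f\|^2 \;-\; \sum_{i\in J} v_i^2 \|P_{\mathcal W_i}f\|^2.
\]
Using the trivial bound $\|P_{\mathcal W_i}f\|\le\|f\|$ together with $\sum_{i\in J} v_i^2 = C$, the error term is at most $C\|f\|^2$. Now the key step: since $K$ has closed range and $f\in R(K)=K(\mathcal H)$, part (2) of the Moore--Penrose lemma gives $\|f\|\le \|K^{\dag}\|\,\|K^*f\|$, hence $\|f\|^2\le \|K^{\dag}\|^2\|K^*f\|^2$. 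Substituting,
\[
\sum_{i\in I\setminus J} v_i^2 \|P_{\mathcal W_i}f\|^2 \;\ge\; \bigl(A - C\|K^{\dag}\|^2\bigr)\|K^*f\|^2,
\]
and the hypothesis $A-C\|K^{\dag}\|^2>0$ ensures this is a genuine lower frame bound.

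There is no real obstacle here: the argument is the standard perturbation-by-erasure estimate. The only point requiring care is recognizing that, because the error term is estimated through $\|f\|$ rather than $\|K^*f\|$, one cannot obtain a lower bound of the claimed form on all of $\mathcal H$, which is precisely why the conclusion must be stated for $R(K)$; the Moore--Penrose inequality is exactly the tool that reconciles the two norms on that subspace.
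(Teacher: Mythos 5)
Your proposal is correct and follows essentially the same route as the paper: split off the $J$-terms, bound them by $C\|f\|^2$, and convert $\|f\|$ to $\|K^{\dag}\|\,\|K^*f\|$ on $R(K)$ via the Moore--Penrose estimate. The paper merely compresses your two inequalities into the single step $\|P_{\mathcal W_i}f\|\leq\|K^{\dag}\|\,\|K^*f\|$ for $f\in R(K)$.
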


\begin{proof} Since $K$ has closed range, for all $f \in  R(K)$ we have for all $i\in I$,
$$\|P_{\mathcal W_i} f \| \leq \|K^{\dag}\| \|K^*f\|$$ and hence
 $\sum\limits_{i\in J}v_i^2\|P_{\mathcal W_i}f\|^2 \leq C \|K^{\dag}\|^2 \|K^*f\|^2~\text{for all}~f \in R(K).$ Consequently for all $f \in R(K)$,
 $$(A- C \|K^{ \dag}\|^2) \|K^* f\|^2 = A\|K^* f\|^2 - C \|K^{\dag}\|^2 \|K^*f\|^2  \leq \sum_{i\in {I \setminus J}}v_i^2\|P_{\mathcal W_i}f\|^2.$$
The upper bound follows directly from the assumption.
\end{proof}

\section{Perturbation Properties}\label{Sec-per}

In this section we analyze stability conditions of $K$-fusion frames under perturbations.

\begin{lem} \label{Sec4-lem1} Let $ K_1 \in \mathcal L(\mathcal H)$, $\mathcal W_v$ be a $K_1$-fusion frame for $\mathcal H$. Suppose  $ K_2 \in \mathcal L(\mathcal H)$ and $a, b\geq 0$ such that $$\|(K_1 ^* - K_2 ^*)f\| \leq a\|K_1 ^* f\| + b \|K_2 ^* f\|,~~\text{for all}~f \in \mathcal H.$$ Then $\mathcal W_v$ is also a $K_2$-fusion frame for $\mathcal H$ if $b<1$.

\end{lem}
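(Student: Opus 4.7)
The plan is to leave the upper bound alone and only do work on the lower bound, since the right-hand inequality in \eqref{Eq:K-Fus-frame}, namely $\sum_{i\in I} v_i^2\|P_{\mathcal{W}_i}f\|^2 \leq B\|f\|^2$, does not involve $K_1$ at all and is therefore inherited verbatim by the candidate $K_2$-fusion frame.

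For the lower bound, the natural move is to compare $\|K_2^*f\|$ with $\|K_1^*f\|$ using the hypothesis. By the triangle inequality,
\[
\|K_2^*f\| \leq \|K_1^*f\| + \|(K_1^*-K_2^*)f\| \leq (1+a)\|K_1^*f\| + b\|K_2^*f\|,
\]
so the condition $b<1$ can be solved for $\|K_2^*f\|$ to obtain
\[
\|K_2^*f\| \leq \frac{1+a}{1-b}\,\|K_1^*f\|,
\]
and hence $\|K_1^*f\|^2 \geq \bigl(\tfrac{1-b}{1+a}\bigr)^{2}\|K_2^*f\|^{2}$. Multiplying by the $K_1$-fusion frame lower bound $A$ and inserting it into \eqref{Eq:K-Fus-frame} for $K_1$ gives
\[
A\Bigl(\tfrac{1-b}{1+a}\Bigr)^{2}\|K_2^*f\|^{2} \leq A\|K_1^*f\|^{2} \leq \sum_{i\in I} v_i^2 \|P_{\mathcal{W}_i}f\|^2,
\]
for every $f\in\mathcal{H}$, which is the desired lower $K_2$-fusion frame inequality with explicit constant $A(1-b)^2/(1+a)^2$.

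I do not expect any real obstacle here: the only thing to watch is the solvability step, which is exactly where the assumption $b<1$ is used (if $b\geq 1$ the coefficient $1-b$ becomes non-positive and the reverse comparison fails). No extra hypotheses on $K_2$ (such as closed range, or any relation to $\mathcal{W}_i$) are needed, since both sides of the $K_2$-fusion frame inequality are controlled entirely through the comparison of $\|K_2^*f\|$ with $\|K_1^*f\|$ and the original $K_1$-fusion frame bounds. Thus the proof reduces to this short triangle-inequality manipulation together with transferring the $B\|f\|^2$ upper bound unchanged.
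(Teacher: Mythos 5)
Your proposal is correct and follows exactly the paper's argument: the same triangle-inequality estimate $\|K_2^*f\| \leq (1+a)\|K_1^*f\| + b\|K_2^*f\|$, the same use of $b<1$ to solve for $\|K_2^*f\|$, and the same resulting lower bound $A\bigl(\tfrac{1-b}{1+a}\bigr)^2$ with the upper bound inherited unchanged. No differences worth noting.
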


\begin{proof} Since $\mathcal W_v$ is $K_1$-fusion frame for $\mathcal H$, there exist $A~, B>0$, for all $f \in \mathcal H$ we have $A \|K_1 ^*f\|^2 \leq \sum\limits_{i \in I} v_i ^2 \|P_{\mathcal W_i}f\|^2 \leq B \|f\|^2$. Now for all $f \in \mathcal H$ we obtain
\begin{eqnarray*}
\|K_2 ^*f\| \leq \|(K_1 ^* - K_2 ^*)f\| + \|K_1 ^*f\| \leq (1+a)\|K_1 ^*f\| + b\|K_2 ^*f\|.
\end{eqnarray*}
Therefore for all $f \in \mathcal H$ we have
\begin{eqnarray*}
A\left(\frac {1-b} {1+a}\right)^2    \|K_2 ^*f\|^2 \leq A \|K_1 ^*f\|^2 \leq \sum\limits_{i \in I} v_i ^2 \|P_{\mathcal W_i}f\|^2 \leq B \|f\|^2.
\end{eqnarray*}
Hence our assertion is tenable.

\end{proof}

\begin{cor}  Let $ K_1~,K_2 \in \mathcal L(\mathcal H)$ and $0\leq a,~ b < 1$ so that for all $f \in \mathcal H$, $$\|(K_1 ^* - K_2 ^*)f\| \leq a\|K_1 ^* f\| + b \|K_2 ^* f\|.$$ Also suppose, $\mathcal W_v$ is a weighted collection of closed subspaces of $\mathcal H$. Then $\mathcal W_v$ is $K_1$-fusion frame for $\mathcal H$ if and only if it is $K_2$-fusion frame for $\mathcal H$.

\end{cor}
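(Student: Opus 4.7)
The plan is to obtain both implications as two applications of Lemma \ref{Sec4-lem1}, using the symmetry of the hypothesis in $K_1$ and $K_2$. The hypothesis of the corollary is almost verbatim the hypothesis of the lemma, except that now \emph{both} constants $a$ and $b$ lie in $[0,1)$, and the roles of $K_1$ and $K_2$ can be interchanged because $\|(K_1^*-K_2^*)f\|=\|(K_2^*-K_1^*)f\|$.

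First I would handle the forward direction. Assuming $\mathcal{W}_v$ is a $K_1$-fusion frame for $\mathcal{H}$, the bound $\|(K_1^*-K_2^*)f\|\leq a\|K_1^*f\|+b\|K_2^*f\|$ together with $b<1$ puts us exactly in the situation of Lemma \ref{Sec4-lem1}, which immediately yields that $\mathcal{W}_v$ is a $K_2$-fusion frame for $\mathcal{H}$ (with lower bound $A\bigl(\tfrac{1-b}{1+a}\bigr)^{2}$ and the same upper bound $B$).

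For the reverse direction, I would rewrite the hypothesis as
\[
\|(K_2^*-K_1^*)f\| \;=\; \|(K_1^*-K_2^*)f\| \;\leq\; b\|K_2^*f\| + a\|K_1^*f\|,
\]
so that it now has the form required by Lemma \ref{Sec4-lem1} with $K_1$ and $K_2$ swapped and with the constants $a$ and $b$ interchanged. Since $a<1$ by assumption, the lemma applies, and from the $K_2$-fusion frame property of $\mathcal{W}_v$ we conclude that $\mathcal{W}_v$ is a $K_1$-fusion frame for $\mathcal{H}$ (with lower bound $A'\bigl(\tfrac{1-a}{1+b}\bigr)^{2}$ where $A'$ is the $K_2$-fusion lower bound, and the same upper bound).

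There is really no obstacle here: the upper bound $\sum v_i^2\|P_{\mathcal{W}_i}f\|^2\leq B\|f\|^2$ does not involve $K_1$ or $K_2$, so it transfers trivially, and both lower bounds follow from a single lemma. The only thing to remember is that the condition $b<1$ in Lemma \ref{Sec4-lem1} must be checked for the operator appearing on the right of the implication in \emph{each} direction, which is precisely why the corollary strengthens the hypothesis to $a,b\in[0,1)$.
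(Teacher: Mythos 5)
Your proposal is correct and matches the paper's proof, which likewise obtains both implications by applying Lemma \ref{Sec4-lem1} and interchanging the roles of $K_1$ and $K_2$. You have simply spelled out the symmetry argument (and the role of the strengthened hypothesis $a,b\in[0,1)$) in more detail than the paper does.
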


\begin{proof} The proof follows from above Lemma \ref{Sec4-lem1} by interchanging the roles of $K_1$ and $K_2$.


\end{proof}

Above results immediately provide the following result.
\begin{cor}  Let $ K \in \mathcal L(\mathcal H)$, $\mathcal W_v$ be  $K$-fusion frame for $\mathcal H$ with $$\|K^* f-f\| \leq a \|K^* f\| + b \|f\|,~~ \text{for all}~f \in \mathcal H,$$ where $0\leq a,~ b <1$. Then $\mathcal W_v$ is fusion frame for $\mathcal H$.

\end{cor}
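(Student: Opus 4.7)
The plan is to apply the immediately preceding corollary with $K_1 = K$ and $K_2 = I$, where $I$ denotes the identity operator on $\mathcal H$. Under this identification, $K_2^{*} = I$, so the hypothesis $\|K^*f - f\| \leq a\|K^*f\| + b\|f\|$ becomes exactly $\|(K_1^* - K_2^*)f\| \leq a\|K_1^*f\| + b\|K_2^*f\|$ for all $f \in \mathcal H$, with constants $a, b \in [0,1)$ as required. The preceding corollary then yields the equivalence: $\mathcal W_v$ is a $K_1$-fusion frame for $\mathcal H$ if and only if it is a $K_2$-fusion frame for $\mathcal H$. Since $\mathcal W_v$ is a $K$-fusion frame by assumption, it is an $I$-fusion frame.

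The remaining step is simply to unpack the definition in the case $K = I$. Substituting $I$ for $K$ in \eqref{Eq:K-Fus-frame} reduces the left-hand inequality $A\|K^*f\|^2$ to $A\|f\|^2$, so the defining inequality becomes
\begin{equation*}
A\|f\|^2 \leq \sum_{i \in I} v_i^2 \|P_{\mathcal W_i}f\|^2 \leq B\|f\|^2,
\end{equation*}
which is precisely the fusion frame condition \eqref{Eq:Fus-frame}. Thus $\mathcal W_v$ is a fusion frame for $\mathcal H$.

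There is no substantive obstacle here; the only thing to verify is that the perturbation inequality really fits the template of the preceding corollary with $K_2 = I$ (it does, directly) and that an $I$-fusion frame coincides with an ordinary fusion frame (immediate from the definition). The proof is therefore a one-line reduction to the preceding corollary.
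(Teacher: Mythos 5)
Your proof is correct and is exactly the reduction the paper intends: the authors state that the corollary follows ``immediately'' from the preceding lemma and corollary, and your instantiation $K_1 = K$, $K_2 = \mathrm{id}_{\mathcal H}$ (so that $K_2^* = \mathrm{id}_{\mathcal H}$ and an $\mathrm{id}_{\mathcal H}$-fusion frame is by definition a fusion frame) is precisely that argument spelled out. The only cosmetic caveat is to avoid denoting the identity operator by $I$, since the paper already uses $I$ for the index set.
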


Given a fusion Bessel sequence or $K$-fusion frame or fusion frame, can we construct a $K$-fusion frame? Following results address this using perturbations on projection operators.

\begin{thm}\label{Sec4-thm1} Let $a, b, c\geq 0$ and $\mathcal W_w, \mathcal V_v$ be two weighted collections of closed subspaces of $\mathcal H$ so that for all $f\in \mathcal H$, $$\left(\sum_{i \in I} \|(w_i P_{\mathcal W_i} - v_i P_{\mathcal V_i})f \|^2 \right)^{\frac {1} {2}}\leq  a \left(\sum_{i \in I} w_i ^2 \|P_{\mathcal W_i}f\|^2\right)^{\frac {1} {2}} + b  \left(\sum_{i \in I} v_i ^2 \|P_{\mathcal V_i}f\|^2\right)^{\frac {1} {2}} + c ~\Lambda(f),$$ for some $\Lambda:\mathcal H\rightarrow \mathbb R_+$. Then the following results hold:
\begin{enumerate}
\item Let $\mathcal W_w$ be a fusion Bessel sequence in $\mathcal H$ and $b<1$, $c=0$. Then $\mathcal V_v$ is a $K$-fusion frame for $\mathcal H$ for any $ K \in \mathcal L(\mathcal H)$ satisfying $R(K) \subseteq R(T_{\mathcal V})$, where $T_{\mathcal V}$ is the associated synthesis operator of   $\mathcal V_v$.

\item\label{Sec4-thm1-2}  Let $ K \in \mathcal L(\mathcal H)$, $\mathcal W_w$ be a $K$-fusion frame for $\mathcal H$ with bounds $A,~ B>0$ and $\Lambda(f)=\|K^*f\|$. Then if $a <1,~ b < 1,~ 0\leq \frac{c}{1-a}<\sqrt{A}$, $\mathcal V_v$ is also a  $K$-fusion frame for $\mathcal H$ with bounds $\left(\frac {\sqrt A (1-a) -c} {(1+b)}\right)^2$ and $\left(\frac {(1+a) \sqrt B + c \|K\|} {(1-b)}\right)^2$.

\item Let $\mathcal W_w$ be a fusion frame for $\mathcal H$ with bounds $A,~  B>0$ and $\Lambda(f) = \|f\|$. Then $\mathcal V_v$ forms a fusion frame for $\mathcal H$ with bounds $\left(\frac {\sqrt A -c-a\sqrt B} {1+b}\right)^2$ and $\left(\frac {(1+a)\sqrt B +c} {1-b}\right)^2$, if $a\sqrt B + c <\sqrt A, ~ b < 1$.
\end{enumerate}
\end{thm}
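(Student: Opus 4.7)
The backbone of all three parts is the triangle inequality in $\ell^2(I)$ applied to the $\mathcal H$-valued sequences $\{w_i P_{\mathcal W_i} f\}_{i \in I}$ and $\{v_i P_{\mathcal V_i} f\}_{i \in I}$, which gives
$$\Bigl|\bigl(\textstyle\sum_{i} w_i^2 \|P_{\mathcal W_i}f\|^2\bigr)^{1/2} - \bigl(\textstyle\sum_{i} v_i^2 \|P_{\mathcal V_i}f\|^2\bigr)^{1/2}\Bigr| \le \bigl(\textstyle\sum_{i} \|(w_i P_{\mathcal W_i} - v_i P_{\mathcal V_i})f\|^2\bigr)^{1/2}.$$
Combining this with the hypothesis of the theorem produces two master estimates: one bounding $(\sum v_i^2\|P_{\mathcal V_i}f\|^2)^{1/2}$ above by $(1+a)(\sum w_i^2\|P_{\mathcal W_i}f\|^2)^{1/2} + b(\sum v_i^2\|P_{\mathcal V_i}f\|^2)^{1/2} + c\Lambda(f)$ (call this (I)), and its mirror with $a,b$ interchanged and $\mathcal W, \mathcal V$ swapped (call this (II)). Using $b<1$ (resp.\ $a<1$) one absorbs the repeated term to the left-hand side.

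For part (1), the Bessel hypothesis $\sum w_i^2\|P_{\mathcal W_i}f\|^2 \le B\|f\|^2$ fed into (I) with $c=0$ immediately delivers the Bessel bound $\bigl((1+a)\sqrt B/(1-b)\bigr)^2$ for $\mathcal V_v$, so its synthesis operator $T_{\mathcal V}$ is well defined and bounded. Since $R(K) \subseteq R(T_{\mathcal V})$, Douglas' factorization theorem (Theorem~\ref{Thm-Douglas}) supplies a scalar $\alpha > 0$ with $KK^* \le \alpha\, T_{\mathcal V} T_{\mathcal V}^*$, that is, $\|K^*f\|^2 \le \alpha \sum v_i^2 \|P_{\mathcal V_i}f\|^2$, which is precisely the missing lower $K$-fusion bound.

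Parts (2) and (3) are mechanical applications of the same template. In (2), with $\Lambda(f) = \|K^*f\|$, estimate (I) combined with the upper $K$-fusion bound for $\mathcal W_w$ and the trivial inequality $\|K^*f\| \le \|K\|\|f\|$ produces the claimed upper bound; estimate (II) combined with the lower bound $\sqrt A\|K^*f\| \le (\sum w_i^2\|P_{\mathcal W_i}f\|^2)^{1/2}$ and a rearrangement produces the lower bound, the hypothesis $c/(1-a) < \sqrt A$ being exactly the positivity condition on the resulting constant. Part (3) has the same shape, with $\Lambda(f) = \|f\|$ and the upper fusion bound $(\sum w_i^2\|P_{\mathcal W_i}f\|^2)^{1/2} \le \sqrt B\|f\|$ substituted on the right-hand side of (II) to turn the $a(\sum w_i^2\|P_{\mathcal W_i}f\|^2)^{1/2}$ term into $a\sqrt B\|f\|$; the condition $a\sqrt B + c < \sqrt A$ is then precisely what makes the lower frame bound positive. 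The only genuinely non-trivial step anywhere is part (1)'s lower bound: since $\mathcal W_w$ is assumed only Bessel, no perturbation argument can by itself supply a $K$-fusion lower estimate on $\mathcal V_v$, and Douglas' theorem combined with the range inclusion $R(K) \subseteq R(T_{\mathcal V})$ is the natural device that closes that gap.
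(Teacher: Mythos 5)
Your proposal is correct and follows essentially the same route as the paper: Minkowski's inequality in $\ell^2(I)$ to obtain the two absorption estimates, Douglas' factorization theorem applied to $R(K)\subseteq R(T_{\mathcal V})$ for the lower bound in part (1), and the substitutions $\sqrt A\,\|K^*f\|\leq\bigl(\sum_i w_i^2\|P_{\mathcal W_i}f\|^2\bigr)^{1/2}$ (resp. $\leq\sqrt B\,\|f\|$) in parts (2) and (3). You also correctly identify Douglas' theorem as the one genuinely non-routine ingredient, which is exactly how the paper closes part (1).
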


\begin{proof}
\begin{enumerate}
\item  Since $\mathcal W_w$ is a fusion Bessel sequence in $\mathcal H$, there exists $B>0$ such that for all $f \in \mathcal H$ we have $\sum\limits_{i \in I} w_i ^2 \|P_{\mathcal W_i}f\|^2 \leq B \|f\|^2$. Using Minkowski's inequality, for all $f \in \mathcal H$, we obtain

\begin{eqnarray*}
\left(\sum_{i \in I} v_i ^2 \|P_{\mathcal V_i}f\|^2\right)^{\frac {1} {2}} &\leq & \left(\sum_{i \in I}\|(w_i P_{\mathcal W_i} - v_i P_{\mathcal V_i})f \|^2 \right)^{\frac {1} {2}} +  \left(\sum_{i \in I} w_i ^2 \|P_{\mathcal W_i}f\|^2\right)^{\frac {1} {2}} \\ &\leq & (1+a)  \left(\sum_{i \in I} w_i ^2 \|P_{\mathcal W_i}f\|^2\right)^{\frac {1} {2}} + b  \left(\sum_{i \in I} v_i ^2 \|P_{\mathcal V_i}f\|^2\right)^{\frac {1} {2}},
\end{eqnarray*}
Hence $$\sum_{i \in I} v_i ^2 \|P_{\mathcal V_i}f\|^2 \leq  \sqrt B \left(\frac {1+a} {1-b}\right)^2 \|f\|^2,~~~\forall~ f\in \mathcal H.$$
Therefore $T_{\mathcal V}^*$ and hence $T_{\mathcal V}$ is well-defined.
The left hand inequality directly follows from Theorem \ref{Thm-Douglas}~.

\item  Since $\mathcal W_w$ is a $K$-fusion frame for $\mathcal H$ with bounds $A , B >0$, for all $f \in \mathcal H$ we have $A \|K^*f\|^2 \leq \sum\limits_{i \in I} w_i ^2 \|P_{\mathcal W_i}f\|^2 \leq B \|f\|^2$. Now
\begin{eqnarray*}
\left(\sum_{i \in I} v_i ^2 \|P_{\mathcal V_i}f\|^2\right)^{\frac {1} {2}} &\leq & \left(\sum_{i \in I}\|(w_i P_{\mathcal W_i} - v_i P_{\mathcal V_i})f \|^2 \right)^{\frac {1} {2}} +  \left(\sum_{i \in I} w_i ^2 \|P_{\mathcal W_i}f\|^2\right)^{\frac {1} {2}} \\ &\leq & (1+a)  \left(\sum_{i \in I} w_i ^2 \|P_{\mathcal W_i}f\|^2\right)^{\frac {1} {2}} + b  \left(\sum_{i \in I} v_i ^2 \|P_{\mathcal V_i}f\|^2\right)^{\frac {1} {2}} + c \|K^*f\|,
\end{eqnarray*}
for all $f \in \mathcal H$. Hence
$$\sum_{i \in I} v_i ^2 \|P_{\mathcal V_i}f\|^2 \leq \left(\frac {(1+a) \sqrt B + c \|K\|} {(1-b)}\right)^2 \|f\|^2,~~~\forall~ f\in \mathcal H.$$

Similarly, for the lower bound we have,
$$\left(\sum_{i \in I} w_i ^2 \|P_{\mathcal W_i}f\|^2\right)^{\frac {1} {2}} \leq a \left(\sum_{i \in I} w_i ^2 \|P_{\mathcal W_i}f\|^2\right)^{\frac {1} {2}} + (1+b) \left(\sum_{i \in I} v_i ^2 \|P_{\mathcal V_i}f\|^2\right)^{\frac {1} {2}}  + c \|K^*f\|,$$ for all $f \in \mathcal H$. Hence we obtain
$$\left(\frac {\sqrt A (1-a) -c} {(1+b)}\right)^2 \|K^*f\|^2 \leq \sum_{i \in I} v_i ^2 \|P_{\mathcal V_i}f\|^2, ~~\forall~f\in \mathcal H.$$

\item  Since $\mathcal W_w$ is a fusion frame for $\mathcal H$ with bounds $A , B >0$, for all $f \in \mathcal H$ we have $A \|f\|^2 \leq \sum\limits_{i \in I} w_i ^2 \|P_{\mathcal W_i}f\|^2 \leq B \|f\|^2$. Also for all $f \in \mathcal H$ we obtain
\begin{eqnarray*}
\left(\sum_{i \in I} v_i ^2 \|P_{\mathcal V_i}f\|^2\right)^{\frac {1} {2}} &\geq &  \left(\sum_{i \in I} w_i ^2 \|P_{\mathcal W_i}f\|^2\right)^{\frac {1} {2}} - \left(\sum_{i \in I}\|(w_i P_{\mathcal W_i} - v_i P_{\mathcal V_i})f \|^2 \right)^{\frac {1} {2}} \\ &\geq & (\sqrt A - c) \|f\| - a  \left(\sum_{i \in I} w_i ^2 \|P_{\mathcal W_i}f\|^2\right)^{\frac {1} {2}} - b  \left(\sum_{i \in I} v_i ^2 \|P_{\mathcal V_i}f\|^2\right)^{\frac {1} {2}},
\end{eqnarray*}
Therefore  for all $f \in \mathcal H$ we obtain $$\left (\frac {\sqrt A -c-a\sqrt B} {1+b}\right)^2 \|f\|^2 \leq \sum_{i \in I} v_i ^2 \|P_{\mathcal V_i}f\|^2.$$

Moreover, for all $f \in \mathcal H$ we have
\begin{eqnarray*}
\left(\sum_{i \in I} v_i ^2 \|P_{\mathcal V_i}f\|^2\right)^{\frac {1} {2}} &\leq & \left(\sum_{i \in I}\|(w_i P_{\mathcal W_i} - v_i P_{\mathcal V_i})f \|^2 \right)^{\frac {1} {2}} +  \left(\sum_{i \in I} w_i ^2 \|P_{\mathcal W_i}f\|^2\right)^{\frac {1} {2}} \\ &\leq & (1+a)  \left(\sum_{i \in I} w_i ^2 \|P_{\mathcal W_i}f\|^2\right)^{\frac {1} {2}} + b  \left(\sum_{i \in I} v_i ^2 \|P_{\mathcal V_i}f\|^2\right)^{\frac {1} {2}} + c \|f\|,
\end{eqnarray*}
Hence for all $f \in \mathcal H$ we obtain $$\sum_{i \in I} v_i ^2 \|P_{\mathcal V_i}f\|^2 \leq \left (\frac {(1+a)\sqrt B +c} {1-b}\right)^2 \|f\|^2.$$

\end{enumerate}
\end{proof}
We acknowledge that recently Li and Leng \cite{LiLe18} proved a similar result as stated in the second statement of above theorem. We present the result here as this work has been done almost simultaneously with the work of Li and Leng.

In the following proposition we discuss another perturbation condition on the projection operators to obtain a $K$-fusion frame.

\begin{prop}  Let $ K \in \mathcal L(\mathcal H)$, $\mathcal W_w$ be a $K$-fusion frame for $\mathcal H$ with bounds $A,~ B>0$. Also suppose $\mathcal V_v$  is any weighted collection of closed subspaces  of $\mathcal H$ so that for all $f \in \mathcal H$,
$$\sum_{i \in I} |\langle f, \left(w_i ^2 P_{\mathcal W_i} - v_i^2 P_{\mathcal V_i}\right)f \rangle| \leq R \|K^* f\|^2,~~where~~ 0<R<A.$$ Then $\mathcal V_v$  forms a $K$-fusion frame for $\mathcal H$ with bounds $(A-R)~and~ (B+R\|K\|)$.

\end{prop}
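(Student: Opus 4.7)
The plan is to reduce the hypothesis to a direct comparison between the fusion frame sums of $\mathcal{W}_w$ and $\mathcal{V}_v$. The key observation is that for an orthogonal projection $P$ one has $\langle f, Pf\rangle=\langle Pf,Pf\rangle=\|Pf\|^2$, so for every $i\in I$,
\[
\langle f,\, w_i^2 P_{\mathcal{W}_i} f\rangle = w_i^2\|P_{\mathcal{W}_i}f\|^2,\qquad \langle f,\, v_i^2 P_{\mathcal{V}_i} f\rangle = v_i^2\|P_{\mathcal{V}_i}f\|^2.
\]
Summing and applying the triangle inequality in the form $|\sum x_i - \sum y_i|\leq \sum|x_i-y_i|$, the hypothesis then gives
\[
\Bigl|\sum_{i\in I} w_i^2\|P_{\mathcal{W}_i}f\|^2 - \sum_{i\in I} v_i^2\|P_{\mathcal{V}_i}f\|^2\Bigr| \leq R\|K^*f\|^2 \qquad\text{for all } f\in\mathcal{H}.
\]

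From here the two bounds are immediate. For the lower bound, rearrange the above to obtain $\sum_i v_i^2\|P_{\mathcal{V}_i}f\|^2 \geq \sum_i w_i^2\|P_{\mathcal{W}_i}f\|^2 - R\|K^*f\|^2$, and invoke the $K$-fusion lower bound $A\|K^*f\|^2\leq \sum_i w_i^2\|P_{\mathcal{W}_i}f\|^2$ to conclude $(A-R)\|K^*f\|^2\leq \sum_i v_i^2\|P_{\mathcal{V}_i}f\|^2$; the hypothesis $0<R<A$ ensures positivity. For the upper bound, rearrange to $\sum_i v_i^2\|P_{\mathcal{V}_i}f\|^2 \leq \sum_i w_i^2\|P_{\mathcal{W}_i}f\|^2 + R\|K^*f\|^2$, then use $\sum_i w_i^2\|P_{\mathcal{W}_i}f\|^2\leq B\|f\|^2$ together with $\|K^*f\|^2\leq \|K\|^2\|f\|^2$ to yield the stated upper bound.

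There is no real obstacle here: the whole argument is the identification $\langle f, Pf\rangle=\|Pf\|^2$, a triangle inequality, and the frame inequalities for $\mathcal{W}_w$. The only mildly subtle point is noticing that one may drop the absolute values when summing over $i$ because the hypothesis already bounds the sum of $|\langle f,(w_i^2P_{\mathcal{W}_i}-v_i^2P_{\mathcal{V}_i})f\rangle|$, which dominates $|\sum_i\langle f,(w_i^2P_{\mathcal{W}_i}-v_i^2P_{\mathcal{V}_i})f\rangle|$; this is what lets us convert the scalar-valued hypothesis into a two-sided estimate on the fusion sums directly, without having to control any cross-terms.
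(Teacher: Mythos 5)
Your proof is correct and follows essentially the same route as the paper: the identity $\langle f,P f\rangle=\|Pf\|^2$ together with the triangle inequality reduces everything to the $K$-fusion frame inequalities for $\mathcal W_w$. One small caveat: your final step $\|K^*f\|^2\leq \|K\|^2\|f\|^2$ actually produces the upper bound $\left(B+R\|K\|^2\right)\|f\|^2$ rather than the stated $\left(B+R\|K\|\right)\|f\|^2$, so the constant in the proposition (which the paper's own ``similarly'' glosses over) appears to have a typographical error that your argument silently corrects.
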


\begin{proof} Since $\mathcal W_w$ is a $K$-fusion frame for $\mathcal H$ with bounds $A, B>0$, we have $$A \|K^*f\|^2 \leq \sum\limits_{i \in I} v_i ^2 \|P_{\mathcal W_i}f\|^2 \leq B \|f\|^2,~~~\forall ~f \in \mathcal H.$$ Now for all $f \in \mathcal H$ we obtain
\begin{eqnarray*}
\sum_{i \in I} w_i^2\|P_{\mathcal W_i}f\|^2 &\leq& \sum_{i \in I} |\langle f, \left(w_i ^2 P_{\mathcal W_i} - v_i^2 P_{\mathcal V_i}\right)f \rangle| + \sum_{i \in I} v_i^2\|P_{\mathcal V_i}f\|^2  \\
&\leq& R \|K^* f\|^2 + \sum_{i \in I} v_i^2\|P_{\mathcal V_i}f\|^2
\end{eqnarray*}
Therefore for all $f \in \mathcal H$ we have, $(A-R)\|K^* f\|^2 \leq \sum\limits_{i \in I} v_i ^2 \|P_{\mathcal V_i}f\|^2$.

Similarly, $\sum\limits_{i \in I} v_i ^2 \|P_{\mathcal V_i}f\|^2 \leq (B+R\|K\|) \|f\|^2$, for  all $f \in \mathcal H$.

\end{proof}

In the following two results we analyze perturbation conditions under which a fusion Bessel sequence forms a $K$-fusion frame.

\begin{thm}  Let $\mathcal W_w$ be a fusion Bessel sequence in $\mathcal H$ with bound $B>0$ and $J \subsetneq I$ with $T_{\mathcal W}$ is the associated synthesis operator of $\lbrace (\mathcal W_i, w_i)\rbrace_{i \in {I \setminus J}}$. Let $a,~b\geq 0$ and $ K \in \mathcal L(\mathcal H)$ satisfying $\|(K^* - T_\mathcal W T_{\mathcal W}^*)f\| \leq a~ \|K^*f\| + b~ \|T_{\mathcal W}^* f\|$ for all $f \in \mathcal H$. Then $\lbrace (\mathcal W_i, w_i)\rbrace_{i \in {I \setminus J}}$ forms a $K$-fusion frame for $\mathcal H$ with bounds $\left(\frac {1-a} {b+\|T_\mathcal W\|}\right)^2$ and $B$ if $a< 1$.
\end{thm}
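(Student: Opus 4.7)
The plan is to reduce the statement to two estimates on $\|T_{\mathcal W}^* f\|^2$, using the identity $\|T_{\mathcal W}^* f\|^2 = \sum_{i \in I\setminus J} w_i^2 \|P_{\mathcal W_i}f\|^2$, which holds because $T_{\mathcal W}$ is the synthesis operator of the sub-collection $\lbrace (\mathcal W_i,w_i)\rbrace_{i\in I\setminus J}$. Once this identification is made, both the upper and the lower $K$-fusion frame inequalities are direct.

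For the upper bound, I would simply observe that $\lbrace (\mathcal W_i,w_i)\rbrace_{i\in I\setminus J}$ is a sub-family of the fusion Bessel sequence $\mathcal W_w$, so
\[
\sum_{i \in I\setminus J} w_i^2 \|P_{\mathcal W_i}f\|^2 \;\leq\; \sum_{i \in I} w_i^2 \|P_{\mathcal W_i}f\|^2 \;\leq\; B\|f\|^2,
\]
which gives the Bessel-type bound $B$ without using the perturbation hypothesis at all.

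The core of the argument is the lower bound. Here I would start from the reverse triangle inequality
\[
\|K^* f\| \;\leq\; \|K^* f - T_{\mathcal W}T_{\mathcal W}^* f\| + \|T_{\mathcal W}T_{\mathcal W}^* f\|,
\]
apply the hypothesis to the first summand and the operator-norm bound $\|T_{\mathcal W}T_{\mathcal W}^* f\|\leq \|T_{\mathcal W}\|\,\|T_{\mathcal W}^* f\|$ to the second, to obtain
\[
(1-a)\,\|K^* f\| \;\leq\; \bigl(b+\|T_{\mathcal W}\|\bigr)\,\|T_{\mathcal W}^* f\|.
\]
Since $a<1$, squaring and using the identification above yields
\[
\left(\frac{1-a}{b+\|T_{\mathcal W}\|}\right)^{2}\|K^* f\|^2 \;\leq\; \sum_{i\in I\setminus J} w_i^2 \|P_{\mathcal W_i}f\|^2,
\]
which is precisely the required lower frame inequality.

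I do not anticipate any serious obstacle: the only subtlety is making sure that $T_{\mathcal W}$ refers to the sub-collection (so that $\|T_{\mathcal W}^* f\|^2$ matches the truncated sum), and keeping track of the fact that the Bessel bound $B$ for the whole family automatically bounds the sub-family, so no positivity condition beyond $a<1$ is needed.
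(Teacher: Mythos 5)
Your proposal is correct and follows essentially the same route as the paper: the triangle inequality $\|K^*f\|\leq\|(K^*-T_{\mathcal W}T_{\mathcal W}^*)f\|+\|T_{\mathcal W}T_{\mathcal W}^*f\|$ combined with the hypothesis and $\|T_{\mathcal W}T_{\mathcal W}^*f\|\leq\|T_{\mathcal W}\|\|T_{\mathcal W}^*f\|$ gives $(1-a)\|K^*f\|\leq(b+\|T_{\mathcal W}\|)\|T_{\mathcal W}^*f\|$, and the identification $\|T_{\mathcal W}^*f\|^2=\sum_{i\in I\setminus J}w_i^2\|P_{\mathcal W_i}f\|^2$ together with the Bessel bound of the full family finishes the argument. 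Your version is in fact slightly more careful, since you make explicit the identification of $\|T_{\mathcal W}^*f\|^2$ with the truncated sum, which the paper uses implicitly.
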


\begin{proof} We have for  all $f \in \mathcal H$, $\|K^*f\| \leq \|(K^* - T_\mathcal WT_{\mathcal W}^*)f\| + \|T_\mathcal WT_{\mathcal W}^*f\| \leq  a \|K^*f\| + (b+\|T_\mathcal W\|)\|T_{\mathcal W}^* f\|$. Therefore $$\left(\frac {1-a} {b+\|T_\mathcal W\|}\right)^2 \|K^*f\|^2 \leq  \sum_{i \in {I\setminus J}} w_i^2 \|P_{\mathcal W_i}f\|^2  \leq \sum_{i \in I} w_i^2 \|P_{\mathcal W_i}f\|^2 \leq B \|f\|^2,$$ for all $f\in \mathcal H$.

\end{proof}

\begin{thm} Let $\mathcal W_w$ be a fusion Bessel sequence in $\mathcal H$ with bound $B>0$ and let  $J \subsetneq I$ so that the associated  synthesis operator of $\lbrace (\mathcal W_i, w_i)\rbrace_{i \in {I \setminus J}}$  is $T_{\mathcal W}$. Let $a,~b,~c\geq 0$ and $K \in \mathcal L(\mathcal H)$ be a closed range operator such that $\|(K^* - T_{\mathcal W}T_{\mathcal W}^*)f\| \leq a~ \|K^*f\| + b~ \|T_{\mathcal W}^* f\|+c~\|f\|$ for all $f \in \mathcal H$. Then if $a+c\|K^{\dag}\| <1$, $\lbrace (\mathcal W_i, w_i)\rbrace_{i \in {I \setminus J}}$ is a $K$-fusion frame for $R(K)$ with bounds $\left(\frac {1-a-c\|K^{\dag}\|} {b+\|T_\mathcal W\|}\right)$ and $B$.
\end{thm}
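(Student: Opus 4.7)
The structure of this proof should closely mirror the earlier perturbation theorem, but with the extra $c\|f\|$ term controlled via the Moore--Penrose inverse of $K$. The plan is as follows.

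First I would start from the reverse triangle inequality
\[
\|K^*f\| \leq \|(K^* - T_{\mathcal W}T_{\mathcal W}^*)f\| + \|T_{\mathcal W}T_{\mathcal W}^*f\|
\]
valid for all $f\in\mathcal H$. Applying the hypothesis on the first summand and the operator-norm bound $\|T_{\mathcal W}T_{\mathcal W}^*f\| \leq \|T_{\mathcal W}\|\,\|T_{\mathcal W}^*f\|$ on the second summand yields
\[
\|K^*f\| \leq a\|K^*f\| + (b+\|T_{\mathcal W}\|)\,\|T_{\mathcal W}^*f\| + c\|f\|.
\]

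The key step, and the one that distinguishes this result from the preceding theorem, is to restrict the inequality to $f \in R(K)$ in order to absorb the stray $c\|f\|$ term. Since $K$ is closed range, the Moore--Penrose lemma gives $\|f\|\leq \|K^{\dagger}\|\,\|K^*f\|$ for every $f\in R(K)=K(\mathcal H)$ (using the lemma statement $\|f\|/\|T^{\dagger}\| \leq \|T^*f\|$ on $T(\mathcal H)$ with $T = K$). Substituting into the inequality above and collecting the $\|K^*f\|$ terms gives
\[
\bigl(1 - a - c\|K^{\dagger}\|\bigr)\,\|K^*f\| \;\leq\; (b+\|T_{\mathcal W}\|)\,\|T_{\mathcal W}^*f\|, \qquad f\in R(K),
\]
and the standing assumption $a + c\|K^{\dagger}\| < 1$ makes the left-hand coefficient positive, so I can divide through and square to obtain
\[
\left(\frac{1-a-c\|K^{\dagger}\|}{b+\|T_{\mathcal W}\|}\right)^{2}\|K^*f\|^2 \;\leq\; \|T_{\mathcal W}^*f\|^2 \;=\; \sum_{i\in I\setminus J} w_i^{2}\|P_{\mathcal W_i}f\|^{2}.
\]

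For the upper bound I would simply invoke the hypothesis that $\mathcal W_w$ is a fusion Bessel sequence on all of $I$ with bound $B$; dropping the indices in $J$ only decreases the sum, so
\[
\sum_{i\in I\setminus J} w_i^{2}\|P_{\mathcal W_i}f\|^{2} \;\leq\; \sum_{i\in I} w_i^{2}\|P_{\mathcal W_i}f\|^{2} \;\leq\; B\|f\|^{2},
\]
giving the advertised upper frame bound. There is no real obstacle here — the only subtle point is recognizing that the $c\|f\|$ term forces the conclusion to live on $R(K)$ rather than on all of $\mathcal H$, and that the closed-range hypothesis is precisely what lets us pay for this term via $\|K^{\dagger}\|$.
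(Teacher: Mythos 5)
Your proof follows the paper's argument step for step: the triangle inequality $\|K^*f\| \leq \|(K^* - T_{\mathcal W}T_{\mathcal W}^*)f\| + \|T_{\mathcal W}T_{\mathcal W}^*f\|$, the perturbation hypothesis, the Moore--Penrose estimate $\|f\|\leq \|K^{\dagger}\|\,\|K^*f\|$ on $R(K)$ to absorb the $c\|f\|$ term, and the Bessel bound for the upper inequality. The one difference is that you correctly square the resulting inequality to obtain the lower bound $\left(\frac{1-a-c\|K^{\dagger}\|}{b+\|T_{\mathcal W}\|}\right)^{2}$, whereas the paper leaves the constant unsquared and writes $\|T_{\mathcal W}^*f\| = \sum_{i\in I\setminus J} w_i^{2}\|P_{\mathcal W_i}f\|^{2}$ (which should be $\|T_{\mathcal W}^*f\|^{2}$); your version is the one consistent with the definition of a $K$-fusion frame, so you have in fact repaired a small slip in the paper's stated bound.
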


\begin{proof} For all $f \in \mathcal H$ we have, $\|K^*f\| \leq \|(K^* - T_{\mathcal W}T_{\mathcal W}^*)f\| + \|T_{\mathcal W}T_{\mathcal W}^*f\| \leq  a~ \|K^*f\| + \left(b+\|T_{\mathcal W}\|\right)\|T_{\mathcal W}^* f\|+c~\|f\|$ and hence for all $f \in R(K)$,
$$\left(1-a-c\|K^{\dag}\|\right)\|K^*f\| \leq \left(b+\|T_{\mathcal W}\|\right)\|T_{\mathcal W}^* f\|.$$ Therefore for all $f \in R(K)$, we have the following: $$\left(\frac{1-a-c\|K^{\dag}\|} {b+\|T_\mathcal W\|}\right)~ \|K^*f\| \leq \|T_{\mathcal W}^* f\| = \sum_{i \in {I\setminus J}} w_i^2 \|P_{\mathcal W_i}f\|^2 \leq \sum_{i \in I} w_i^2 \|P_{\mathcal W_i}f\|^2 \leq B \|f\|^2.$$ Consequently, our declaration is sustainable.
\end{proof}

\section*{Acknowledgment}

The first author acknowledges the financial support of MHRD, Government of India.

\bibliographystyle{amsplain}
\bibliography{references-frame}

\end{document}